\documentclass[11pt,reqno]{amsart}

\usepackage[T1]{fontenc}
\usepackage{lmodern}
\usepackage{microtype}
\usepackage{needspace}
\usepackage[margin=1.03in]{geometry}
\usepackage{amsmath,amssymb,mathtools}
\usepackage{amsthm}
\usepackage{xcolor}
\usepackage[colorlinks=true,
  linkcolor=blue!50!black,
  citecolor=blue!50!black,
  urlcolor=blue!50!black]{hyperref}
\usepackage[capitalize,noabbrev]{cleveref}

\allowdisplaybreaks[1]
\numberwithin{equation}{section}

\theoremstyle{plain}
\newtheorem{theorem}{Theorem}[section]
\newtheorem{proposition}[theorem]{Proposition}
\newtheorem{lemma}[theorem]{Lemma}
\newtheorem{corollary}[theorem]{Corollary}

\theoremstyle{definition}

\theoremstyle{remark}

\DeclareMathOperator{\Spec}{Spec}
\DeclareMathOperator{\Jac}{Jac}
\DeclareMathOperator{\gr}{gr}
\DeclareMathOperator{\ord}{ord}
\DeclareMathOperator{\height}{ht}
\newcommand{\N}{\mathbb N}
\newcommand{\m}{\mathfrak m}
\newcommand{\q}{\mathfrak q}
\newcommand{\p}{\mathfrak p}

\title[Dimensions of a ring and its power series ring]
  {Dimensions of a ring and its formal power series ring}

\author{Viet-Hoang Tran}
\address{Department of Mathematics, National University of Singapore, Singapore 119076}
\email{hoang.tranviet@u.nus.edu}
\urladdr{https://vh-tran.github.io/}

\author{Thieu N. Vo}
\address{Department of Computer Science, University of Bath, United Kingdom}
\email{ntv22@bath.ac.uk}

\author{Tan M. Nguyen}
\address{Department of Mathematics, National University of Singapore, Singapore 119076}
\email{tanmn@nus.edu.sg}
\urladdr{https://tanmnguyen89.github.io/}

\subjclass[2020]{Primary 13F25; Secondary 13C15, 13A18}
\keywords{Krull dimension, formal power series ring, local domain,
  strong finite type}

\hypersetup{
  pdftitle={Dimensions of a ring and its formal power series ring},
  pdfauthor={Viet-Hoang Tran}
}

\begin{document}
\raggedbottom

\begin{abstract}
Understanding the relation between $\dim R$ and $\dim R[[x]]$ is a classical problem in commutative algebra. For a Noetherian ring $R$, one has $\dim R[[x]]=\dim R+1$, but the general case is considerably more delicate. In 1973, Arnold proved that finite power-series dimension requires the strong finite type (SFT) condition, whereas, in 2002, Coykendall constructed a one-dimensional SFT domain whose power series ring has infinite dimension. The question of Coykendall and Gilmer whether $\dim R[[x]]<\infty$ forces $\dim R[[x]]\le2\dim R+1$ was answered negatively by Kang and Park in 2009. In this paper, we prove that, as $R$ ranges over the  nonzero commutative rings with identity, the finite pairs $(\dim R,\dim R[[x]])$ are exactly $(0,1)$ and the pairs $(n,m)$ with $1\le n<m$.
\end{abstract}


\maketitle
\enlargethispage{3pt}

\section{Background, related results, and the main theorem}
\label{sec:introduction}

Throughout, rings are nonzero, commutative, and have an identity,
and dimension means Krull dimension. The notation $R[[x]]$ denotes
the ring of formal power series in one indeterminate over $R$.
For an ideal $I\subseteq R$, we write
\[
 I[[x]]=\left\{\sum_{j\ge0}a_jx^j:a_j\in I\right\}.
\]
This is the kernel of the ring homomorphism $R[[x]]\longrightarrow(R/I)[[x]]$; it need not
equal the extended ideal $IR[[x]]$. We write $\N=\{1,2,\ldots\}$
and $\N_0=\N\cup\{0\}$.

The dimension theory of formal power series differs substantially
from that of polynomial rings. For a finite-dimensional ring $R$,
Seidenberg's bounds \cite{Seidenberg1953} give
\[
 \dim R+1\le\dim R[x]\le2\dim R+1.
\]
The lower bound also holds for $R[[x]]$, and equality holds when $R$
is Noetherian. In general, however, $R[[x]]$ can have infinite
dimension, and even its finite dimension need not satisfy the
polynomial upper bound.

An ideal $I$ is of \emph{strong finite type} (SFT) if there exist a
finitely generated ideal $J\subseteq I$ and an integer $e\ge1$ such
that $a^e\in J$ for every $a\in I$. A ring is SFT if each of its
ideals is SFT. Arnold \cite{Arnold1973a} proved that
\begin{equation}\label{eq:arnold}
 \dim R[[x]]<\infty\quad\Longrightarrow\quad R\text{ is SFT};
\end{equation}
see also Roitman~\cite{Roitman2015}. Condo, Coykendall, and
Dobbs~\cite[Corollary~2.2]{CondoCoykendallDobbs1996} proved that,
for a zero-dimensional SFT ring $R$,
\begin{equation}\label{eq:zero-classical}
 \dim R[[x_1,\ldots,x_s]]=s\qquad(s\ge1).
\end{equation}
These results account for the exceptional finite pair $(0,1)$.

Several important classes have more restrictive dimension formulas.
For a finite-dimensional Pr\"ufer domain $D$, Arnold's theorem
gives $\dim D[[x]]=\dim D+1$ precisely when $D$ is SFT, and
infinite dimension otherwise~\cite[Theorem~3.8]{Arnold1973b}.
More generally, if $D$ is an SFT Pr\"ufer domain of positive finite
dimension $d$, then (see \cite[Theorem~3.6]{Arnold1982})
\[
 \dim D[[x_1,\ldots,x_s]]=sd+1.
\]
The SFT hypothesis alone does not ensure finite power-series
dimension: Coykendall~\cite{Coykendall2002} constructed a
one-dimensional SFT domain whose power series ring has infinite
dimension.

Arnold~\cite{Arnold1981} had already shown that a finite
power-series dimension need not equal the coefficient dimension
plus one. Kang and Park~\cite{KangPark2009} constructed finite-dimensional
counterexamples to the bound $\dim R[[x]]\le2\dim R+1$.
Their mixed-extension formula yields, for suitable SFT Pr\"ufer
domains $D$ of dimension $d$ and $R=D[z_1,\ldots,z_p]$,
\[
 (\dim R,\dim R[[x]])=(d+p,\,d(p+1)+1).
\]
For example, $d=4$ and $p=3$ give $(7,17)$.
Thus finite violations of the polynomial bound are established
prior results. The construction below instead keeps the coefficient
dimension equal to one and prescribes every finite power-series
dimension at least two.

Our method also relates to the study of subrings of power series
rings defined by bounds on the degrees of their coefficients; see
Kang and Toan~\cite{KangToan2015a,KangToan2015b}.
Here the coefficients are rational functions, and the filtration
controls both their denominators and their numerator degrees.
We prove the required division estimate and Noetherianity directly.

\begin{theorem}\label{thm:classification}
Let $n,m\in\N_0$. There exists a nonzero commutative ring $R$ with
identity such that
\[
 \dim R=n\qquad\text{and}\qquad\dim R[[x]]=m
\]
if and only if
\[
 (n,m)=(0,1)\qquad\text{or}\qquad 1\le n<m.
\]
\end{theorem}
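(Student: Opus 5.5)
The proof splits into necessity and sufficiency.

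\emph{Necessity.} Suppose $\dim R=n$ and $\dim R[[x]]=m$ are both finite. The lower bound $\dim R[[x]]\ge\dim R+1$ gives $m\ge n+1$: a chain $\p_0\subsetneq\cdots\subsetneq\p_n$ in $R$ yields the chain $\p_0[[x]]\subsetneq\cdots\subsetneq\p_n[[x]]\subsetneq\p_n[[x]]+(x)$ of primes in $R[[x]]$. So $n<m$, and it remains to see that $n=0$ forces $m=1$. If $\dim R=0$ and $\dim R[[x]]<\infty$, then $R$ is SFT by Arnold's theorem \eqref{eq:arnold}, and \eqref{eq:zero-classical} with $s=1$ gives $\dim R[[x]]=1$. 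This excludes every pair $(0,m)$ with $m\ne1$.

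\emph{Sufficiency.} The pair $(0,1)$ is realized by a field. For $1\le n<m$, I would first reduce to the case $n=1$ by taking products. If $R_1$ realizes $(1,m)$ and $A$ is a Noetherian ring of dimension $n$ (for example a field $k$ with $A=k[t_1,\ldots,t_n]$), then $(R_1\times A)[[x]]\cong R_1[[x]]\times A[[x]]$. Its dimension is $\max(m,n+1)=m$, because $m\ge n+1$. The base ring $R_1\times A$ has dimension $\max(1,n)=n$. So it suffices to construct, for each $m\ge2$, a one-dimensional ring $R$ with $\dim R[[x]]=m$. The case $m=2$ is handled by a DVR, leaving $m\ge3$.

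\emph{Construction for $m\ge3$.} This is the main obstacle. The introduction suggests building a one-dimensional quasi-local domain $D$ as a subring of a power series ring, with coefficients rational functions in auxiliary variables $y_1,\ldots,y_{m-2}$ over a field $k$. Membership would be controlled by a filtration that bounds the denominators and numerator degrees of the coefficients in terms of the index, in the spirit of Kang--Toan. The plan would have four steps:
\begin{enumerate}
\item Show that $D$ is one-dimensional: it is quasi-local, and its maximal ideal is the radical of a single element $t$.
\item Prove a division estimate: any power series over $D$ can be divided by $t$ up to a controlled loss in the filtration. This lets one identify $D[[x]]$ modulo suitable primes with rings built from $\gr$ or from a Noetherian ring such as $k(y)[[t,x]]$-type objects.
\item Exhibit an explicit chain of length $m$ in $D[[x]]$. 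The upper-$\,y$ primes that appear only after adjoining $x$ would give $m-2$ extra steps, using the fact that the filtration allows series like $\sum y_i^j t^j x^j$ in $D[[x]]$ but not in $D[x]$.
\item Prove the upper bound $\dim D[[x]]\le m$. Any prime either contains $t$, so it lives over $(D/\sqrt{tD})[[x]]$, which is one-dimensional; or it avoids $t$, so it localizes into $D[[x]][1/t]$. I would show the latter embeds into a Noetherian ring of dimension $m-1$ via the Noetherianity statement proved directly.
\end{enumerate}

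The hardest part is the upper bound, specifically controlling primes of $D[[x]]$ not containing $t$. This is precisely where the division estimate and Noetherianity must be used, and I would attack it first by comparing $D[[x]]_t$ with a localization of $k[y_1,\ldots,y_{m-2}][[t,x]]$.
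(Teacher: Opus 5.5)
Your necessity argument and your reduction of sufficiency to one-dimensional examples (a field for $(0,1)$, a DVR for $m=2$, and a product with a Noetherian ring of dimension $n$) are exactly the paper's proof. At that point the paper simply invokes \cref{thm:local}, which is stated in the introduction. The gap is that you neither cite nor prove that realization. For $m\ge3$ you give only a plan in which no ring is actually defined, and that is where all the content of the theorem lies (Sections~\ref{sec:filtration}--\ref{sec:dimension}). Several steps of the plan would also fail as written.

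\textbf{Step 1 (dimension one).} $\sqrt{tD}=\m$ does not give $\dim D=1$; you also need $D[1/t]$ to be a field. In the paper this forces two things. The filtration must exhaust all of $K=k(U)$, with denominators $D_N=\prod_{j\le N}q_j^{\lfloor N/j\rfloor}$ running through every irreducible. It must also grow exponentially in the $c$-degree ($V_j=W_{2^j-1}$), so that the reciprocal estimate yields $c^d/a\in T$ for every $a\ne0$.

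\textbf{Primes containing $t$.} The claim that such a prime ``lives over $(D/\sqrt{tD})[[x]]$'' needs a uniform exponent. The paper gets it from $\m^3\subseteq cT$, again a consequence of exponential growth, which gives $\sqrt{cA}=\m[[x]]$. The equality $\sqrt{tD}=\m$ alone does not in general place $\m[[x]]$ inside $\sqrt{tD[[x]]}$.

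\textbf{Step 4 (primes avoiding $t$).} An embedding into a Noetherian ring bounds nothing, since Krull dimension is not monotone under passage to subrings. Your count is also off: a bound $m-1$ on the $t$-avoiding segment, plus one step inside $V(t)$ and one connecting step, gives only $\dim D[[x]]\le m+1$. The paper instead proves the equality $A[1/c]=H[1/c]$, where $H$ consists of series over $B=K\otimes_k k[[x]]$ with $w(b_n)\le M2^n$ for a series-dependent $M$. Then:
\begin{itemize}
\item $H$ is Noetherian, by the uniform bounded division of \cref{lem:division} for arbitrary finitely generated ideals of $B$ (not a division by $t$) together with $\gr_cH\cong B[z]$;
\item $c\in\Jac(H)$ and $\dim B\le r$, so $\dim H[1/c]\le r$.
\end{itemize}
Your comparison ring $k[y_1,\ldots,y_{m-2}][[t,x]]$ cannot play this role. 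Inverting $t$ there still leaves dimension $m-1$. Moreover, $T[[x]][1/c]$ does not even lie in its fraction field: $\sum_n c^n/D_{2^n-1}\in T$ has coefficients with infinitely many distinct irreducible denominators.

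\textbf{Step 3 (lower bound).} The series $\sum_j y_i^jt^jx^j=(1-y_itx)^{-1}$ is a unit of $D[[x]]$ and yields no prime. In the paper the extra $m-2$ steps come from the primes $(u_1-f_1(x),\ldots,u_j-f_j(x))$ of $B$, with $f_i\in xk[[x]]$ algebraically independent over $k$ (\cref{lem:exponentials}). These are lifted coefficientwise in $c$ and contracted to $A$, and $\q_r\cap k[[x]]=0$ places the top of that chain inside $\m[[x]]$.
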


The essential realization statement is stronger in dimension one.

\begin{theorem}\label{thm:local}
For every integer $m\ge2$, there exists a one-dimensional local
domain $T$ such that $\dim T[[x]]=m$.
\end{theorem}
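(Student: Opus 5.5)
We will construct $T$ explicitly as a one-dimensional local domain sitting inside a field of rational functions, with a filtration controlling both the denominators and the numerator degrees of its elements, in the spirit of Kang and Toan but with rational coefficients. Concretely, fix a field $k$ and put $K=k(t_1,\ldots,t_{m-1})$. Let $v$ be a rank-one valuation on $K$ whose residue field is large and whose value group contains $\mathbb Q$, for instance a suitable monomial valuation after adjoining all roots of $t_1$. Define $T$ as the subring of the valuation ring $V$ consisting of elements whose "transcendental part" is bounded in an appropriate filtration. Such a ring should be a pullback $T=V\times_{V/\m_V}A$, where $A$ is a $k$-subalgebra of the residue field $F=V/\m_V$. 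In a pullback of this form, $\dim T=\dim V+\dim A$. We therefore take $A$ zero-dimensional, namely a field, so that $T$ is local of dimension one. The obstacle is that a plain pullback tends to give $\dim T[[x]]$ either $2$ or $\infty$. We therefore replace the naive pullback by the filtered subring, so that $F$ contributes a Noetherian graded piece of dimension $m-2$ after passing to power series.

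The steps, in order, are as follows.

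First, verify that $T$ is a local domain of Krull dimension one. Its maximal ideal $\m=\m_V\cap T$ will be the unique nonzero prime, because every nonzero element of $\m$ has positive value and $V$ has rank one.

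Second, verify that $T$ is SFT. Otherwise \eqref{eq:arnold} forces $\dim T[[x]]=\infty$. The filtration is designed so that $\m$ has an element $\pi$ with $a^e\in \pi T$ for all $a\in\m$.

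Third, compute $\dim T[[x]]$ through the contraction map $\Spec T[[x]]\to\Spec T$. Primes lying over $0$ come from the localization at $T\setminus\{0\}$. This behaves like power series over the fraction field along chains, and contributes a chain of length at most one. Primes lying over $\m$ contain $\m[[x]]$ when $\m$ is SFT, by Arnold's lemma on SFT ideals, so they correspond to primes of $T[[x]]/\m[[x]]\cong (T/\m)[[x]]$, or more precisely of an associated graded object. The key reduction is to show that the chains in the fibre over $\m$, together with the generic fibre, are governed by the ring $\gr_\pi(T)[[x]]$ or by its localization. By construction this ring is a Noetherian ring of dimension $m-1$; this is where the numerator-degree bounds enter. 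This should yield $\dim T[[x]]=1+(m-1)=m$.

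Fourth, give the lower bound by an explicit chain. The upper bound follows from the Noetherianity of the graded pieces, combined with a division estimate. That estimate will show that every prime $P$ of $T[[x]]$ not containing $\m$ is determined by its image in $T[1/\pi][[x]]$-type data, where $T[1/\pi]$ is a field or a one-dimensional Noetherian ring.

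The main obstacle is the third step, specifically the division estimate. We must show that power series whose coefficients have unbounded denominators cannot create new primes in the mixed fibre. These would be the source of Coykendall-type infinite chains. The first approach to try is a Weierstrass-style division by elements of $\pi T[[x]]$. This requires a uniform bound on the numerator degrees of quotients in terms of those of the dividend, and proving that uniform bound is the delicate part of the construction.
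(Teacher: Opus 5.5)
There is a genuine gap: the proposal does not yet contain a construction. The only concrete ring you name is the pullback $V\times_{V/\m_V}A$, with $A$ a field, over a rank-one valuation whose value group contains $\mathbb Q$. Its maximal ideal is $\m_V$, which is not SFT: any finitely generated subideal lies in $bV$ for some $b$ with $v(b)>0$, while $\m_V$ contains elements of arbitrarily small positive value. So Arnold's theorem already gives $\dim T[[x]]=\infty$ for that ring. The ``filtered subring'' that is supposed to replace it is never defined, so Steps~1--4 have nothing to act on.

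Step~1 is also not a valid argument on its own. A local subring of a rank-one valuation ring whose maximal ideal has positive values can have any dimension; for example, $k[t_1,t_2]_{(t_1,t_2)}$ sits inside the ring of the monomial valuation with weights $1,\sqrt2$. You need something like the paper's $\m^3\subseteq cT$, together with the fact that every nonzero $a\in T$ divides a power of $c$, i.e.\ that $T[1/c]$ is a field.

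The most serious problem is that the fibre count in Step~3 points the wrong way. As you observe, if $\m$ is SFT and $T/\m$ is a field, every prime of $T[[x]]$ over $\m$ contains $\m[[x]]$. So the closed fibre is $\Spec (T/\m)[[x]]$, which contributes a chain of length one. There is no room there for an associated graded ring of dimension $m-1$. If the generic fibre also contributed at most one, your own steps would give $\dim T[[x]]\le 3$.

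In fact $(T\setminus\{0\})^{-1}T[[x]]$ is not the power series ring over the fraction field of $T$, and it is exactly where the $m-2$ extra steps live. In the paper, $T[[x]][1/c]$ has dimension $r=m-2$ and contains the chain $Q_0\subsetneq\cdots\subsetneq Q_r$ lying over $(0)$. Two ideas are therefore missing.
\begin{itemize}
\item[(a)] An upper bound on the generic fibre. The paper embeds $T[[x]]$ in the ring $H$ of series over $B=S^{-1}k[[x]][U]$ with weights at most $M2^n$. It notes $T[[x]][1/c]=H[1/c]$, because the doubling growth lets a power of $c$ absorb $M$. It then proves $H$ Noetherian through the bounded division lemma in $B$: degree-bounded ideal membership, with the denominators $D_N$ built to absorb any fixed $s_0$. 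This replaces your proposed Weierstrass division in $\pi T[[x]]$.
\item[(b)] An explicit chain in the generic fibre. The paper builds it from the primes $(u_1-f_1(x),\ldots,u_j-f_j(x))B$, with $f_1,\ldots,f_r\in xk[[x]]$ algebraically independent, contracted via $\q_j[[c]]\cap T[[x]]$.
\end{itemize}
Neither appears in your proposal. The claim that $\gr_\pi(T)[[x]]$ is Noetherian of dimension $m-1$ is made for a ring that has not been specified.
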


For $m=2$ one may take $T=k[[c]]$, where $k$ is a field.
For $m=r+2\ge3$, Sections~\ref{sec:filtration}--\ref{sec:dimension}
construct a local domain $T_r$ and prove
\begin{equation}\label{eq:target}
 \dim T_r=1,\qquad \dim T_r[[x]]=r+2.
\end{equation}
Section~\ref{sec:classification} deduces
\cref{thm:classification}. Its higher-dimensional realizations use
finite direct products; the theorem is stated for rings without a
domain hypothesis.

\section{The coefficient filtration and the local domain}
\label{sec:filtration}

Fix $r\ge1$ and let $k=\overline{\mathbb Q}$.
Let $u_1,\ldots,u_r$ be algebraically independent indeterminates
over $k$. Write $U=(u_1,\ldots,u_r)$ and
$k[U]=k[u_1,\ldots,u_r]$, and let $K=k(U)$ be the fraction field
of $k[U]$. Since $k[U]$ is countable,
we may enumerate representatives of its associate classes of
irreducible polynomials as $q_1,q_2,\ldots$. All polynomial degrees
below are total degrees in $U$; we set $\deg0=-\infty$.
For $N\in\N_0$, define
\begin{equation}\label{eq:windows}
 D_N=\prod_{j=1}^{N}q_j^{\lfloor N/j\rfloor},\qquad
 W_N=\left\{\frac{p}{D_N}:p\in k[U],\
             \deg p\le\deg D_N+N\right\}.
\end{equation}
Thus $D_0=1$ and $W_0=k$.

\begin{lemma}\label{lem:filtration}
The spaces $W_N$ are finite-dimensional over $k$ and satisfy
\begin{equation}\label{eq:filtration}
 W_N\subseteq W_{N+1},\qquad W_NW_M\subseteq W_{N+M},\qquad
 \bigcup_{N\ge0}W_N=K.
\end{equation}
Consequently the function $w(a)=\min\{N:a\in W_N\}$ on $K$
has $w(0)=0$ and satisfies
\begin{equation}\label{eq:weight}
 w(a+b)\le\max\{w(a),w(b)\},\qquad
 w(ab)\le w(a)+w(b).
\end{equation}
\end{lemma}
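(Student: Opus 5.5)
Each claim in \eqref{eq:filtration} reduces to a statement about the denominators $D_N$ and the degree bound; \eqref{eq:weight} then follows formally.

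\emph{Finite-dimensionality.} Multiplication by $D_N$ identifies $W_N$ with the space of polynomials in $k[U]$ of total degree at most $\deg D_N+N$. That space is finite-dimensional over $k$.

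\emph{Nestedness.} We have $\lfloor N/j\rfloor\le\lfloor (N+1)/j\rfloor$ for $j\le N$, and the new factor $q_{N+1}^{1}$ appears at step $N+1$. Hence $D_N\mid D_{N+1}$. Write $D_{N+1}=D_NE$ with $E\in k[U]$. Then
\[
\frac{p}{D_N}=\frac{pE}{D_{N+1}},\qquad \deg(pE)\le\deg D_N+N+\deg E=\deg D_{N+1}+N\le \deg D_{N+1}+N+1 .
\]

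\emph{Multiplicativity.} The key inequality is superadditivity of the floor: $\lfloor N/j\rfloor+\lfloor M/j\rfloor\le\lfloor (N+M)/j\rfloor$. If $j$ exceeds $N$ or $M$, the corresponding term is $0$, so the inequality still holds under the convention that absent factors have exponent $0$. Thus $D_ND_M$ divides $D_{N+M}$; write $D_{N+M}=D_ND_MF$. For $p/D_N\in W_N$ and $p'/D_M\in W_M$,
\[
\frac{pp'}{D_ND_M}=\frac{pp'F}{D_{N+M}},\qquad \deg(pp'F)\le \deg D_N+N+\deg D_M+M+\deg F=\deg D_{N+M}+N+M .
\]
This is where the denominator and the numerator degree must be tracked together, and the degree slack $N$ is what makes it additive. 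I expect this step to be the only one requiring care.

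\emph{Exhaustion.} Let $a=p/q\in K$ with $q\ne0$. Factor $q=c\prod_j q_j^{e_j}$, where $c\in k^\times$ and only finitely many $j$ have $e_j>0$. Choose $N$ such that:
\begin{itemize}
\item $N\ge j$ for every $j$ with $e_j>0$;
\item $\lfloor N/j\rfloor\ge e_j$ for every such $j$, which holds once $N\ge j e_j$ for all of them.
\end{itemize}
Then $q\mid D_N$, say $D_N=qG$, and $a=c^{-1}\cdot(pG)/D_N$, up to absorbing the constant. The degree condition
\[
\deg(pG)=\deg p+\deg D_N-\deg q\le \deg D_N+N
\]
holds once $N\ge\deg p-\deg q$. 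Hence $a\in W_N$ for $N$ large.

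\emph{The weight function.} Since $W_0=k$, which contains $0$, we get $w(0)=0$, and $w$ is defined on all of $K$ by exhaustion. Let $N=\max\{w(a),w(b)\}$. By nestedness, $a,b\in W_N$, and $W_N$ is a $k$-subspace, so $a+b\in W_N$; this gives the first inequality in \eqref{eq:weight}. For the second, $ab\in W_{w(a)}W_{w(b)}\subseteq W_{w(a)+w(b)}$ by multiplicativity.
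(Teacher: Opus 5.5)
Your proof is correct and takes the same approach as the paper. You use the divisibilities $D_N\mid D_{N+1}$ and $D_ND_M\mid D_{N+M}$ (the latter from floor superadditivity), track numerator degrees, get exhaustion by choosing $N$ with $q\mid D_N$ and $N\ge\deg p-\deg q$, and read off the weight inequalities from the subspace and multiplicative properties. One cosmetic point: if $G$ is defined by $D_N=qG$, then $a=pG/D_N$ exactly, so the factor $c^{-1}$ is unnecessary.
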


\begin{proof}
Finite dimensionality follows from the numerator degree bound.
The inclusions follow from
\[
 D_N\mid D_{N+1},\qquad D_ND_M\mid D_{N+M},
\]
the second divisibility being a consequence of
$\lfloor N/j\rfloor+\lfloor M/j\rfloor\le\lfloor(N+M)/j\rfloor$.
For example, after writing a product over $D_{N+M}$, its numerator
has degree at most
\[
 (\deg D_N+N)+(\deg D_M+M)
       +\deg\frac{D_{N+M}}{D_ND_M}
 =\deg D_{N+M}+N+M.
\]
For $p/q\in K$, choose $N$ such that $q\mid D_N$ and
$N\ge\deg p-\deg q$. Then $p/q\in W_N$, proving exhaustion.
The inequalities for $w$ follow from the vector-space and
multiplicative properties of the filtration.
\end{proof}

Set $V_j=W_{2^j-1}$ and define a subring of $K[[c]]$ by
\begin{equation}\label{eq:T}
 T=T_r=\left\{\sum_{j\ge0}a_jc^j:a_j\in V_j\right\}.
\end{equation}
Indeed, $V_iV_j\subseteq V_{i+j}$ because
$(2^i-1)+(2^j-1)\le2^{i+j}-1$. In particular $T$ is a domain,
$c\in T$, and every constant coefficient belongs to $k$.

We will use the elementary estimate
\begin{equation}\label{eq:composition}
 \sum_{i=1}^{s}2^{j_i}\le2^n
 \quad\text{if }j_i\ge1\text{ and }\sum_{i=1}^{s}j_i=n.
\end{equation}
It follows by repeatedly applying $2^a+2^b\le2^{a+b}$ for
$a,b\ge1$. Hence, in a ring with a weight satisfying
\eqref{eq:weight} and $w(1)=w(-1)=0$, the condition
$w(b_j)\le C2^j$ for $j\ge1$ implies that the coefficients $d_n$ in
\begin{equation}\label{eq:reciprocal}
 \left(1+\sum_{j\ge1}b_jc^j\right)^{-1}=\sum_{n\ge0}d_nc^n
\end{equation}
satisfy $w(d_n)\le C2^n$. Indeed, $d_0=1$, and for $n\ge1$,
\[
 d_n=\sum_{s=1}^{n}(-1)^s
       \sum_{\substack{j_1+\cdots+j_s=n\\j_i\ge1}}
           b_{j_1}\cdots b_{j_s}.
\]

\begin{proposition}\label{prop:local}
The ring $T$ is local, with maximal ideal
\[
 \m=\left\{\sum_{j\ge0}a_jc^j\in T:a_0=0\right\}.
\]
Moreover,
\begin{equation}\label{eq:local-properties}
 \m^3\subseteq cT,\qquad \sqrt{cT}=\m,\qquad
 T[1/c]\text{ is a field},\qquad \Spec T=\{(0),\m\}.
\end{equation}
In particular, $\dim T=1$.
\end{proposition}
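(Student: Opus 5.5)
Locality comes first. I will show that every $f=\sum a_jc^j\in T$ with $a_0\neq0$ is a unit of $T$. Since $a_0\in V_0=W_0=k$, we may divide by $a_0$ and write $f=a_0(1+\sum_{j\ge1}b_jc^j)$ with $b_j=a_j/a_0\in V_j$. Because $k\subseteq W_0$, we have $w(b_j)=w(a_j)\le 2^j-1<2^j$.

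The reciprocal estimate \eqref{eq:reciprocal}, applied with the weight $w$ of \cref{lem:filtration}, only yields $w(d_n)\le 2^n$. That is one too many, since membership in $V_n$ needs $w(d_n)\le 2^n-1$. So I will sharpen \eqref{eq:composition}. Take $j_i\ge1$ with $\sum j_i=n$. Then $\sum_i(2^{j_i}-1)\le 2^n-s\le 2^n-1$, using $s\ge1$ and \eqref{eq:composition}. Combined with \eqref{eq:weight}, this gives $w(b_{j_1}\cdots b_{j_s})\le 2^n-1$, and ultrametricity then gives $d_n\in V_n$. Hence $f^{-1}\in T$.

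Conversely, $\m$ is the kernel of the surjection $T\to k$, $f\mapsto a_0$. It is therefore a maximal ideal, and it consists exactly of the nonunits. So $T$ is local with maximal ideal $\m$.

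Next, $\m^3\subseteq cT$. Let $f,g,h\in\m$. The coefficient of $c^n$ in $fgh$ is a sum of products $a_ib_jd_l$ with $i,j,l\ge1$ and $i+j+l=n$. Such a product has weight at most $2^i+2^j+2^l-3$. I need this to be at most $2^{n-1}-1$, so that $fgh/c$ has its $c^{n-1}$ coefficient in $V_{n-1}$. With three parts each at least $1$, \eqref{eq:composition}-type reasoning gives $2^i+2^j+2^l\le 2^{n-2}+4$: the extreme case is $(n-2,1,1)$, and one checks $2^a+2^b\le 2^{a+b-1}+2$ for $a,b\ge1$. This yields weight at most $2^{n-2}+1\le 2^{n-1}-1$ once $n\ge3$, and $n\ge3$ holds automatically since $i,j,l\ge1$. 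So $\m^3\subseteq cT$. The case bookkeeping here is the fiddliest part, and it is exactly why the exponent is $3$ rather than $2$: with two factors, $(n-1,1)$ gives weight up to $2^{n-1}$.

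Then $\sqrt{cT}=\m$ follows: $c\in\m$ and $\m$ is maximal, and every $f\in\m$ satisfies $f^3\in cT$.

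For $T[1/c]$, take a nonzero $f\in T$ and write $f=c^vg$ with $g=\sum_{j\ge0}a_{j+v}c^j$ and $a_v\ne0$. The worry is that $g$ need not lie in $T$, since $a_{j+v}\in V_{j+v}$ is not necessarily in $V_j$. Instead I will use the exhaustion $\bigcup W_N=K$ from \cref{lem:filtration}: there is some $e$ with $a_v\in V_e$, and then $c^e/a_v\in T$. Now consider
\[
 h=\frac{c^{e}}{a_v c^{v}}f=c^{e}\sum_j \frac{a_{j+v}}{a_v}c^j .
\]
Its coefficients are $a_{j+v}/a_v$ placed at degree $j+e$. To see they lie in $V_{j+e}$, I would choose $e$ larger, e.g.\ $e\ge v+w(1/a_v)$. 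Then $w\le (2^{j+v}-1)+w(1/a_v)\le 2^{j+e}-1$ since $2^{j+v}+2^{e}\le 2^{j+e}$ roughly. So $h\in T$, and $h$ has nonzero constant term after factoring out $c^e$. It is cleaner to multiply by $c^{e}/a_v$ with $c^e/a_v\in T$ and $e$ large enough that $c^{e-v}f/a_v\cdot c^{v}$ has unit form. The outcome is $f\cdot(\text{element of }T)=c^{N}u$ with $u$ a unit, hence $f$ is invertible in $T[1/c]$.

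Thus $T[1/c]$ is a field. Primes of $T$ not containing $c$ correspond to primes of the field $T[1/c]$, so only $(0)$ arises. Primes containing $c$ contain $\sqrt{cT}=\m$, so they equal $\m$. Hence $\Spec T=\{(0),\m\}$ and $\dim T=1$, since $c\neq0$ makes $\m\neq(0)$.

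I expect the main obstacle to be this last step: choosing the shift $e$ so that every coefficient weight fits inside the $V_j$ windows. The $\m^3$ inequality is the second place where care is needed.
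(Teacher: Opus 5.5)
You handle locality, $\m^3\subseteq cT$, $\sqrt{cT}=\m$, and the deduction of $\Spec T$ from the field property correctly, essentially as the paper does. Your sharpened bound $\sum_i(2^{j_i}-1)\le 2^n-1$ is just $V_{j_1}\cdots V_{j_s}\subseteq V_n$.

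\textbf{The gap is in the proof that $T[1/c]$ is a field.} Write $f=c^v a_v u'$ with $u'=1+\sum_{j\ge1}b_jc^j$ and $b_j=a_{v+j}/a_v$. Your conclusion ``$f\cdot(\text{element of }T)=c^Nu$ with $u$ a unit'' takes $u=u'$. But $u'$ is in general not in $T$, let alone a unit of $T$: $w(b_j)$ can be as large as $2^{v+j}-1+w(a_v^{-1})$, well above $2^j-1$. For instance, take $f=c+ac^2$ with $a\in V_2\setminus V_1$ (e.g.\ $a=u_1^3$). Then $f\in T$, yet $u'=1+ac\notin T$.

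Your element $h=c^eu'$ does lie in $T$, but $h=(c^{e-v}a_v^{-1})f$ is merely a multiple of $f$ in $T$. It says nothing about inverting $f$: you applied the shift to $u'$ rather than to $1/u'$. There is also a smaller slip: $a_v\in V_e$ gives $c^ea_v\in T$, not $c^e/a_v\in T$; for the latter you need $w(a_v^{-1})\le 2^e-1$.

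\textbf{What is missing} is control of the coefficients of $1/u'$, which is the whole content of this step: you must show $c^N/f\in T$ for some $N$. The paper does this with the general form of \eqref{eq:reciprocal}.
\begin{itemize}
\item For $j\ge1$ one has $w(b_j)\le C2^j$ with $C=2^v+w(a_v^{-1})$.
\item By \eqref{eq:composition}, the coefficients $d_n$ of $1/u'$ then satisfy $w(d_n)\le C2^n$.
\item Hence $w(a_v^{-1}d_n)\le C'2^n$ with $C'=C+w(a_v^{-1})$.
\item Choose $d$ with $2^d\ge C'+1$. Then $C'2^n\le 2^{n+d}-1$, so by \eqref{eq:shift} one gets $c^d a_v^{-1}(u')^{-1}=c^{d+v}/f\in T$.
\end{itemize}
Your locality argument only treated the special case where coefficient weights are at most $2^j-1$. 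Here you need the version of the reciprocal estimate with an arbitrary constant $C$, followed by the shift.
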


\begin{proof}
If $a\in T$ has nonzero constant coefficient, scale that
coefficient to one. The expansion in~\eqref{eq:reciprocal} and
$V_{j_1}\cdots V_{j_s}\subseteq V_{j_1+\cdots+j_s}$ show that
its formal inverse belongs to $T$. Thus $T$ is local and $T/\m=k$.

For positive $i,j,\ell$ with $i+j+\ell=n$, one has
\begin{equation}\label{eq:cube-estimate}
 (2^i-1)+(2^j-1)+(2^\ell-1)
 \le2^{n-2}+1\le2^{n-1}-1\qquad(n\ge3).
\end{equation}
For the first inequality, shifting one unit from a smaller index
greater than one to a largest index does not decrease the sum of
the powers of two; the maximum is attained at $(n-2,1,1)$.
Every coefficient of $c^n$ in a product of three elements of $\m$
therefore belongs to $V_{n-1}$. Division by $c$ gives
$\m^3\subseteq cT$, and $\sqrt{cT}=\m$ follows.

If $w(b_j)\le C2^j$ for all $j\ge0$, then
\begin{equation}\label{eq:shift}
 c^d\sum_{j\ge0}b_jc^j\in T
 \quad\text{whenever }2^d\ge C+1,
\end{equation}
since $C2^j\le2^{j+d}-1$. For $0\ne a\in T$, write
\[
 a=c^\nu a_\nu\left(1+\sum_{j\ge1}b_jc^j\right),
 \qquad b_j=a_{\nu+j}a_\nu^{-1},\quad a_\nu\ne0.
\]
The estimate
$w(b_j)\le2^{\nu+j}-1+w(a_\nu^{-1})$ gives an exponential
bound. Formula~\eqref{eq:reciprocal} gives the same type of bound
for the reciprocal of the parenthesized series; multiplication by
the fixed scalar $a_\nu^{-1}$ preserves it.
Equation~\eqref{eq:shift}, with $c^{-\nu}$ absorbed into the
shift, yields $c^d/a\in T$ for some $d$.
Thus every nonzero element of $T$ is invertible in $T[1/c]$.

Primes avoiding $c$ correspond to primes of this field, so the only
such prime is $(0)$. The radical identity shows that $\m$ is the
only prime containing $c$. Since $0\ne c\in\m$, the spectrum
and dimension assertions follow.
\end{proof}

\section{Bounded division and an auxiliary Noetherian ring}
\label{sec:division}

Put
\begin{equation}\label{eq:B}
 F=k[[x]],\quad L=k((x)),\quad A_0=F[U],\quad
 S=k[U]\setminus\{0\},\quad B=S^{-1}A_0.
\end{equation}
The natural embeddings identify $B$ with $K\otimes_kF$ inside
$K[[x]]$. The ring $B$ is a Noetherian domain, and
\begin{equation}\label{eq:Bdim}
 \dim B\le r.
\end{equation}
Indeed, if $\q\in\Spec B$ avoids $x$, its lower prime chains lie
in $B[1/x]$, a localization of $L[U]$, and hence $\height\q\le r$.
If $x\in\q$, then $B/xB=K$ gives $\q=xB$. The principal ideal
theorem gives $\height(xB)=1\le r$.

Extend the filtration to $B$ by
\begin{equation}\label{eq:E}
 E_N=W_N\otimes_kF
 =\left\{\frac{p}{D_N}:p\in F[U],\
                    \deg p\le\deg D_N+N\right\}.
\end{equation}
Under the embedding into $K[[x]]$, one also has
\begin{equation}\label{eq:finite-basis}
 W_N[[x]]=E_N.
\end{equation}
Here the left side means series whose coefficients lie in $W_N$.
To verify the identity, expand each coefficient in the finite
$k$-basis $\{U^\alpha/D_N:|\alpha|\le\deg D_N+N\}$ and collect
the coefficients of each basis element into a series in $F$.
The $E_N$ exhaust $B$, are increasing and multiplicative, and define
$w(b)=\min\{N:b\in E_N\}$ on $B$. The inequalities
\eqref{eq:weight} still hold, and every element of $F$ has weight zero.

\begin{lemma}[Bounded division]\label{lem:division}
Fix $b_1,\ldots,b_s\in B$ and let $J=(b_1,\ldots,b_s)B$.
There is an integer $C\ge0$ such that, for every $N\ge0$ and
$h\in J\cap E_N$, one can write
\begin{equation}\label{eq:division}
 h=\sum_{i=1}^{s}b_i z_i,\qquad z_i\in E_{N+C}.
\end{equation}
\end{lemma}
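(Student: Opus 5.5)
We reduce the statement to a degree bound for ideal membership in the Noetherian polynomial ring $A_0=F[U]$. We then obtain that bound by homogenizing with respect to the $U$-degree, so that membership of a degree-$d$ element is witnessed by cofactors of degree at most $d$. Degrees throughout are total degrees in $U$, with coefficients in $F$ regarded as having degree zero, as in \eqref{eq:E}.

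\emph{Step 1: passing to $A_0$.} Let $I=J\cap A_0$, an ideal of the Noetherian ring $A_0$. Let $h\in J\cap E_N$. By \eqref{eq:E} we can write $h=p/D_N$ with $p\in F[U]$ and $\deg p\le\deg D_N+N$. Since $D_N\in S$ is a unit of $B$, we get $p=D_Nh\in J\cap A_0=I$. It therefore suffices to prove the following claim: there are fixed elements $g_1,\ldots,g_m\in I$ such that every $p\in I$ can be written as $p=\sum_j g_jy_j$ with $y_j\in F[U]$ and $\deg y_j\le\deg p$.

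Granting the claim, fix once and for all expressions $g_j=\sum_i b_i\beta_{ij}$ with $\beta_{ij}\in B$, and let $C=\max_{i,j}w(\beta_{ij})$. Then
\[
 h=\sum_i b_i z_i,\qquad z_i=\sum_j\frac{y_j}{D_N}\,\beta_{ij}.
\]
Because $\deg y_j\le\deg p\le\deg D_N+N$, each $y_j/D_N$ lies in $E_N$. The properties \eqref{eq:weight} on $B$ (subadditivity under products, and the max bound under sums) then give $z_i\in E_{N+C}$. The constant $C$ depends only on $b_1,\ldots,b_s$, through the choice of the $g_j$ and the $\beta_{ij}$, and not on $N$ or $h$.

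\emph{Step 2: the degree bound, via homogenization.} Introduce a new variable $t$ and grade $A'=F[t,U]$ by total degree in $t,U$, with $F$ in degree $0$. For $p\in A_0$ of degree $d$, let $p^h=t^dp(U/t)$ be its homogenization. Let $I^*\subseteq A'$ be the ideal generated by all $p^h$ with $p\in I$. This ideal is homogeneous, and $A'$ is Noetherian, so $I^*$ is generated by finitely many homogeneous elements $G_1,\ldots,G_m$. We may take each $G_j=g_j^h$ with $g_j\in I$; setting $t=1$ recovers $g_j$, and $\deg G_j=\deg g_j$.

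Now take $p\in I$ of degree $d$. Then $p^h\in I^*$ is homogeneous of degree $d$. Taking homogeneous components, we can write $p^h=\sum_jG_jY_j$ with each $Y_j$ homogeneous of degree $d-\deg g_j\le d$. Dehomogenizing by $t=1$ gives $p=\sum_j g_jy_j$ with $\deg y_j\le d$, which is the claim.

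\emph{Main obstacle.} The delicate point is the uniformity of the degree bound over the non-field base $F=k[[x]]$. Ordinary Gröbner-basis division is not directly available there. The homogenization argument avoids this, since it uses only that $F[t,U]$ is Noetherian and graded. Two things need checking with care. First, that homogenization and dehomogenization behave as over a field: $p^h$ is well defined with $F$-coefficients, and $(p^h)|_{t=1}=p$. Second, that extracting the degree-$d$ component in $p^h=\sum G_jY_j$ is legitimate. The latter holds because $F$ sits in degree $0$, so each graded piece is an $F$-module.

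The other point needing care is the identification in Step 1 of $E_N$ with fractions $p/D_N$ having numerator degree bound $\deg D_N+N$. This is exactly \eqref{eq:E}, and the only property of $y_j$ it requires is the degree bound established in Step 2.
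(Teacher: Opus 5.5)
Your proof is correct and follows essentially the paper's strategy. You reduce to $J\cap A_0$, use Noetherianity of a graded polynomial ring over $F$ to get finitely many elements of $J\cap A_0$ with degree-bounded cofactors, and pull fixed expressions of these in terms of $b_1,\ldots,b_s$ back to $B$. The differences are local. You get the degree bound by homogenizing in $F[t,U]$ rather than from the ideal of highest-degree parts plus iterative reduction; the two are equivalent. You also take $C=\max_{i,j}w(\beta_{ij})$ and use $E_NE_M\subseteq E_{N+M}$ instead of the paper's explicit choice of $C$ with $s_0D_N\mid D_{N+C}$, which is slightly cleaner bookkeeping.
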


\begin{proof}
The case $J=0$ is immediate. Set $J_0=J\cap A_0$ and filter
$A_0=F[U]$ by total degree in $U$. Its associated graded ring is
$F[U]$, which is Noetherian. Thus the homogeneous ideal generated
by the highest-degree parts of elements of $J_0$ is generated by
the highest-degree parts of finitely many nonzero elements
$\ell_1,\ldots,\ell_t\in J_0$. Write $d_j=\deg\ell_j$.
Every $0\ne a\in J_0$ admits a representation
\begin{equation}\label{eq:degree-division}
 a=\sum_{j=1}^{t}\ell_jp_j,\qquad
 p_j\in F[U],\qquad \deg p_j\le\deg a-d_j.
\end{equation}
Indeed, express the highest-degree part of $a$ using homogeneous
multipliers of degree $\deg a-d_j$, subtract the corresponding
combination of the $\ell_j$, and repeat at the smaller degree.
This process terminates. A negative degree bound means $p_j=0$;
no division by a nonunit of $F$ is involved.

Since $\ell_j\in J$, there are fixed $s_0\in S$ and
$A_{ij}\in F[U]$ with
\[
 \ell_j=\sum_{i=1}^{s}b_i\frac{A_{ij}}{s_0}
 \qquad(1\le j\le t).
\]
For $h\in J\cap E_N$, write $h=a/D_N$, where
$a\in A_0$ and $\deg a\le\deg D_N+N$.
Since $D_N$ is a unit of $B$, we have $a\in J_0$.
Using~\eqref{eq:degree-division}, set
\[
 z_i=\frac{\sum_j A_{ij}p_j}{s_0D_N};
 \qquad h=\sum_i b_i z_i.
\]
There is a fixed $C_0\ge0$ such that each numerator here has degree
at most $\deg D_N+N+C_0$.

Choose $C$ so large that
\begin{equation}\label{eq:denominator-shift}
 s_0D_N\mid D_{N+C}\quad(N\ge0),\qquad C\ge C_0-\deg s_0.
\end{equation}
To obtain the divisibility, for each factor $q_j^{e_j}$ of $s_0$
it suffices to require $C\ge je_j$, since
\[
 \left\lfloor\frac{N+C}{j}\right\rfloor
 -\left\lfloor\frac{N}{j}\right\rfloor\ge e_j.
\]
Writing $z_i$ over $D_{N+C}$ now bounds its numerator degree by
\[
 \deg D_N+N+C_0+\deg\frac{D_{N+C}}{s_0D_N}
 =\deg D_{N+C}+N+C_0-\deg s_0
 \le\deg D_{N+C}+N+C.
\]
Thus $z_i\in E_{N+C}$, as required.
\end{proof}

Inside $B[[c]]$, define
\begin{equation}\label{eq:H}
 H=\left\{\sum_{n\ge0}b_nc^n:
       \text{there is }M\ge0\text{ with }w(b_n)\le M2^n
       \text{ for all }n\right\}.
\end{equation}
The bound may depend on the series. This is a subring containing
$B$: for multiplication, the weight of the coefficient of $c^n$
is at most
\[
 \max_{0\le i\le n}(M2^i+M'2^{n-i})\le(M+M')2^n.
\]
Shifting the coefficients gives
\begin{equation}\label{eq:Hgraded}
 H\cap c^eB[[c]]=c^eH\quad(e\ge0),\qquad
 H/cH\cong B,\qquad \gr_cH\cong B[z],
\end{equation}
where $z$ is the initial form of $c$. Furthermore,
\begin{equation}\label{eq:jacobson}
 c\in\Jac(H).
\end{equation}
Indeed, for every $h\in H$, the positive-degree coefficients of
$ch$ satisfy an exponential bound. Formula~\eqref{eq:reciprocal}
shows that $(1-ch)^{-1}\in H$.

\Needspace{6\baselineskip}
\begin{proposition}\label{prop:noetherian}
The ring $H$ is Noetherian, and $\dim H[1/c]\le r$.
\end{proposition}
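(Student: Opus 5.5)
To prove Noetherianity I will use the standard criterion for a ring filtered by powers of an element lying in the Jacobson radical. Let $I\subseteq H$ be an ideal. By \eqref{eq:Hgraded} the associated graded ring $\gr_cH\cong B[z]$ is Noetherian, since $B$ is Noetherian. Hence the initial-form ideal $\operatorname{in}(I)$ is finitely generated, say by the initial forms of $f_1,\ldots,f_t\in I$, with $f_i\in c^{e_i}H\setminus c^{e_i+1}H$ and leading coefficients $\beta_i\in B$. For each degree $n$, the degree-$n$ component of $\operatorname{in}(I)$ is an ideal $J_n\subseteq B$. These ideals increase and stabilize at some $n_0$, and each $J_n$ is generated by the $\beta_i$ with $e_i\le n$ ($f_i$ multiplied by $c^{n-e_i}$). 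My aim is to show $I=(f_1,\ldots,f_t)$.

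Given $g\in I$, I run the usual successive-approximation division. If $g$ has order $n$ with leading coefficient $g_n\in J_n$, I apply \cref{lem:division} to the generators of $J_{\min(n,n_0)}$. This writes $g_n=\sum\beta_i z_i$ with $w(z_i)\le w(g_n)+C$, where a single constant $C$ serves all $n$, because only finitely many ideals $J_0,\ldots,J_{n_0}$ occur. I then subtract $\sum z_ic^{n-e_i}f_i$ and repeat. The process yields formal quotients $Q_i=\sum_n z_{i,n}c^{n-e_i}\in B[[c]]$ with $g=\sum Q_if_i$ in $B[[c]]$; the series converge $c$-adically because the orders increase.

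\emph{Main obstacle.} The crux is to show $Q_i\in H$, that is, that the weights $w(z_{i,n})$ stay below $M2^n$. A naive induction compounds the constant $C$ at each step, and the remainder coefficients also pick up weight from the products $z\cdot(\text{coefficients of }f_i)$. I will handle this with a doubling-friendly bound. Suppose $g$ and all the $f_i$ satisfy $w(\cdot)\le M2^n$ on their degree-$n$ coefficients. I will inductively maintain that the $n$-th remainder coefficients have weight at most $A2^n - C'$ for a suitable $A\ge M$ and constant $C'$. At each step a coefficient of weight at most $A2^j$ produces quotient coefficients of weight at most $A2^j+C$. It then contributes to degree $n>j$ through coefficients of $f_i$ of weight at most $M2^{n-j+e_i}$, giving total weight at most
\[
A2^j+C+M2^{n-j+e}.
\]
This is at most $A2^n$ once $A$ is large relative to $C$ and $M2^{e}$, using the slack $2^j+2^{n-j}\le 2^n$ as in \eqref{eq:composition}. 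The exponential room in the definition of $H$ is exactly what absorbs the additive constant $C$ from \cref{lem:division}; it is essential that $C$ does not grow with $N$. Thus $Q_i\in H$, and $I$ is finitely generated. As an alternative route, one could show $H$ is $c$-adically Zariskian with Noetherian graded ring, but the completeness issue is the same estimate. Note that \eqref{eq:jacobson} is not even needed once $g=\sum Q_if_i$ is obtained exactly.

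For the dimension bound, I will first note that $H/cH\cong B$ has dimension at most $r$ by \eqref{eq:Bdim}. Since $H$ is Noetherian and $c\in\Jac(H)$ by \eqref{eq:jacobson}, every maximal ideal contains $c$. For a prime $\p$ of $H$ avoiding $c$, I extend a chain below $\p$ to a maximal ideal $\m\supseteq\p+cH$. Because $\p\not\ni c$, Krull's principal ideal theorem gives $\height(\m)\ge\height\p+1$. It therefore suffices to show $\dim H\le r+1$, and I will argue $\height\m\le \height(\m/cH)+1\le r+1$, the standard inequality for a Noetherian local ring modulo one element. Hence $\height\p\le r$, and since the primes of $H[1/c]$ are exactly the primes of $H$ avoiding $c$, this gives $\dim H[1/c]\le r$.
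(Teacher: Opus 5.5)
Your proposal is correct and follows the paper's proof essentially step for step. The shared steps are: initial forms in $\gr_cH\cong B[z]$; degree-by-degree cancellation via \cref{lem:division} with one uniform constant $C$; an exponential induction placing the quotients in $H$ (your inequality $A(2^n-2^j)\ge C+M2^{e}2^{n-j}$ for $0\le j<n$ holds once $A\ge 2M2^{e}+C$, which mirrors the paper's choice $M\ge\max\{A+C,2F_0+C\}$); and the bound $\height\mathfrak n\le\height(\mathfrak n/cH)+1\le r+1$ at maximal ideals, all of which contain $c$.

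Two cosmetic points. The slack $2^j+2^{n-j}\le2^n$ fails at $j=0$, so there it is the size of $A$ that does the work. And $\height\mathfrak n\ge\height\p+1$ only needs $c\in\mathfrak n\setminus\p$, not the principal ideal theorem.
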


\begin{proof}
Let $0\ne I\subseteq H$ be an ideal. Since $B[z]$ is Noetherian,
the ideal of initial forms of $I$ is generated by the initial forms
of finitely many nonzero elements $f_1,\ldots,f_s\in I$. Write
\[
 e_i=\ord_c(f_i),\qquad
 f_i=\sum_{j\ge0}f_{i,e_i+j}c^{e_i+j},\qquad b_i=f_{i,e_i}.
\]
For each $n\ge0$ set $J_n=(b_i:e_i\le n)B$.
There are only finitely many such generator lists, so
\cref{lem:division} supplies one constant $C\ge0$ valid for
all the nonzero $J_n$.

Fix $g=\sum_{n\ge0}g_nc^n\in I$. Choose $A,F_0\ge0$ with
\begin{equation}\label{eq:input-bounds}
 w(g_n)\le A2^n\quad(n\ge0),\qquad
 w(f_{i,e_i+j})\le F_0 2^j\quad(j\ge1,\ 1\le i\le s).
\end{equation}
We cancel the coefficients of $g$ successively. After canceling
the coefficients below degree $n$ by finitely many multiples of
the $f_i$, the residual still lies in $I$. Its coefficient $h_n$
at degree $n$ belongs to $J_n$: if nonzero, its initial form lies
in the homogeneous ideal generated by $b_i z^{e_i}$.
By bounded division choose $a_{i,n}\in B$ such that
\begin{equation}\label{eq:cancellation}
 h_n=\sum_{e_i\le n}b_i a_{i,n},\qquad
 w(a_{i,n})\le w(h_n)+C,
\end{equation}
and put $a_{i,n}=0$ when $e_i>n$.
If $h_n=0$, choose all coefficients to be zero.
Subtracting $\sum_{e_i\le n}a_{i,n}c^{n-e_i}f_i$ cancels
degree $n$ as well.

We must verify that the resulting infinite multipliers belong to
$H$. Put $T_n=\max_i w(a_{i,n})$. A cancellation at degree
$m<n$ contributes to degree $n$ a product
$a_{i,m}f_{i,e_i+n-m}$. Thus~\eqref{eq:weight},
\eqref{eq:input-bounds}, and~\eqref{eq:cancellation} give
\begin{equation}\label{eq:recurrence}
 T_n\le\max\left\{A2^n,
       \max_{0\le m<n}\bigl(T_m+F_0 2^{n-m}\bigr)\right\}+C,
\end{equation}
where the inner maximum is omitted for $n=0$.
Choose $M\ge\max\{A+C,\,2F_0+C\}$.
We claim $T_n\le M2^n$. The case $n=0$ is immediate.
If the claim holds below $n$, then, for $d=n-m\ge1$,
\begin{align*}
 M(2^n-2^m)
 &=M2^m(2^d-1)\\
 &\ge(2F_0+C)(2^d-1)\ge F_0 2^d+C.
\end{align*}
It follows that $T_m+F_0 2^{n-m}+C\le M2^n$; also
$A2^n+C\le M2^n$. Equation~\eqref{eq:recurrence} proves
the induction step. Consequently
\[
 h_i=\sum_{n\ge e_i}a_{i,n}c^{n-e_i}\in H,
\]
because its coefficient of degree $j$ has weight at most
$M2^{e_i}2^j$. The cancellation gives the coefficientwise identity
$g=\sum_i f_i h_i$ in $B[[c]]$, and hence in $H$.
Thus $I=(f_1,\ldots,f_s)H$. The zero ideal is finitely generated
as well, proving that $H$ is Noetherian.

For every maximal ideal $\mathfrak n$ of $H$,
\eqref{eq:jacobson} and the Noetherian local dimension inequality
for a principal quotient give
\[
 \dim H_{\mathfrak n}
 \le\dim(H_{\mathfrak n}/cH_{\mathfrak n})+1
 \le\dim B+1\le r+1.
\]
Therefore $\dim H\le r+1$. Every prime chain avoiding $c$ can
be extended strictly by a maximal ideal, since every maximal ideal
contains $c$. Thus every such chain has length at most $r$, and
\mbox{$\dim H[1/c]\le r$}.
\end{proof}

\section{The dimension of the formal power series ring}
\label{sec:dimension}

Let $A=T[[x]]$. Interchanging the $x$- and $c$-coefficients and
using~\eqref{eq:finite-basis} identifies $A$ with the subring
\begin{equation}\label{eq:array}
 A=\left\{\sum_{n\ge0}b_nc^n\in B[[c]]:
                         b_n\in E_{2^n-1}\text{ for all }n\right\}.
\end{equation}
Indeed, a series in $T[[x]]$ gives at each $c$-degree $n$ an
element of $V_n[[x]]=E_{2^n-1}$. Conversely, expanding each $b_n$
in $x$ gives coefficients in $V_n$, so each resulting
$x$-coefficient belongs to $T$. Multiplication agrees because
each coefficient of $x^i c^j$ involves only a finite sum.

\Needspace{5\baselineskip}
\begin{proposition}\label{prop:upper}
One has $\dim A\le r+2$.
\end{proposition}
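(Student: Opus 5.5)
Since $A\subseteq H$ by \eqref{eq:array} and \eqref{eq:H}, the aim is to squeeze any prime chain of $A$ between what $H$ sees (where $c$ is inverted) and what is left over modulo $c$. First I check that $H[1/c]=A[1/c]$. Given $h=\sum b_nc^n\in H$ with $w(b_n)\le M2^n$, multiply by $c^d$ with $2^d\ge M+1$. Then the coefficient of $c^{n+d}$ is $b_n$, and $w(b_n)\le M2^n\le 2^{n+d}-1$, so $c^dh\in A$, exactly as in \eqref{eq:shift}. Hence $A\subseteq H\subseteq A[1/c]$, and $A[1/c]=H[1/c]$, which by \cref{prop:noetherian} has dimension at most $r$.

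Next I split a prime chain $\p_0\subsetneq\cdots\subsetneq\p_\ell$ of $A$ at the first prime containing $c$. The primes of $A$ avoiding $c$ correspond to primes of $A[1/c]=H[1/c]$. So the part of the chain avoiding $c$ has length at most $r$: it has at most $r+1$ members.

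It remains to show that the primes containing $c$ form a chain of length at most one. Since $c$ lies in $\sqrt{cA}$, these primes correspond to primes of $A/\sqrt{cA}$, and I will identify $\sqrt{cA}$. Let $\mathfrak M=\{f\in A: b_0\in xF\}$. Here $b_0\in E_0=F$, because the constant $c$-coefficient of $A$ is a power series in $x$ over $k$. Then $A/\mathfrak M\cong k$.

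The key step is to show that some power of every element of the kernel $P=\{f: b_0=0\}$ of $A\to F$ lies in $cA$, and that the same holds for $P^3$. This is the array version of the cube estimate \eqref{eq:cube-estimate}. For $f,g,h\in P$, the coefficient of $c^n$ in $fgh$ is a sum of products with weight at most $(2^i-1)+(2^j-1)+(2^\ell-1)\le 2^{n-1}-1$, with $i,j,\ell\ge1$. Hence $fgh/c\in A$, so $P^3\subseteq cA$. Thus the primes containing $c$ correspond to primes of $A/P\cong F=k[[x]]$, because $A\to F$, $f\mapsto b_0$, is surjective with kernel $P$. Now $\dim k[[x]]=1$, so this part of the chain has length at most $1$, i.e. at most $2$ members.

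Altogether a chain has at most $(r+1)+2$ members, so its length is at most $r+2$, giving $\dim A\le r+2$.

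The main obstacle is the identification $A[1/c]=H[1/c]$, which depends on the exponential-weight bookkeeping transferring cleanly. The reduction $A/P\cong k[[x]]$ is also delicate: I must check that $P$ is the kernel and $\sqrt{cA}=P$ (using $c\in P$ and $P^3\subseteq cA$). This is straightforward given \eqref{eq:array}.
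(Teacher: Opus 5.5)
Your proof is correct and follows the paper's argument. You identify $A[1/c]=H[1/c]$ via the shift by $c^d$, show $\sqrt{cA}=P$ with $A/P\cong k[[x]]$, and count the segment of a chain avoiding $c$ and the segment containing $c$. The only difference is that you re-derive $P^3\subseteq cA$ directly from the cube estimate in the array model, whereas the paper transports $\m^3\subseteq cT$ coefficientwise in $x$ to get $P^3\subseteq (cT)[[x]]=cA$.
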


\begin{proof}
Equation~\eqref{eq:array} gives $A\subseteq H$.
If $h=\sum b_nc^n\in H$ has bound $w(b_n)\le M2^n$, choose
$d$ with $2^d\ge M+1$. Then
$M2^n\le2^{n+d}-1$, so $c^dh\in A$. Consequently
\begin{equation}\label{eq:localization}
 A[1/c]=H[1/c],\qquad\dim A[1/c]\le r.
\end{equation}
Put $P=\m[[x]]$. Since $\m^3\subseteq cT$,
\[
 P^3\subseteq(cT)[[x]]=cA\subseteq P.
\]
The equality in the middle follows by dividing coefficients by
$c$. Since $A/P\cong k[[x]]$, we obtain
\begin{equation}\label{eq:closed-part}
 \sqrt{cA}=P,\qquad\dim(A/cA)=1.
\end{equation}
In a finite prime chain of $A$, the segment avoiding $c$ has
length at most $r$, and the segment containing $c$ has length at
most one. If both occur, there is one additional inclusion between
them. Thus every such chain has length at most $r+2$.
\end{proof}

\begin{lemma}\label{lem:exponentials}
There exist $f_1,\ldots,f_r\in xk[[x]]$ algebraically
independent over $k$.
\end{lemma}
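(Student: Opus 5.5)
The label of the lemma suggests exponentials: I would take $f_i=e^{\lambda_i x}-1\in xk[[x]]$, with $\lambda_1,\ldots,\lambda_r\in k=\overline{\mathbb Q}$ linearly independent over $\mathbb Q$. For instance, take $\lambda_i=\sqrt{p_i}$ for distinct primes $p_i$, or $\lambda_i=\log$-free choices such as $2^{1/(i+1)}$ adjusted so that linear independence over $\mathbb Q$ is easy to check. Since $e^{\lambda x}=\sum_n\lambda^nx^n/n!$ has coefficients in $\mathbb Q(\lambda)\subseteq k$, each $f_i$ lies in $xk[[x]]$. Algebraic independence of the $f_i$ is equivalent to algebraic independence of the $g_i=e^{\lambda_i x}=1+f_i$, because $f_i\mapsto g_i$ is an invertible affine change of variables.

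To prove independence, suppose $P(g_1,\ldots,g_r)=0$ for some nonzero $P\in k[y_1,\ldots,y_r]$. Write $P=\sum_\alpha c_\alpha y^\alpha$. Then
\[
 \sum_\alpha c_\alpha e^{\langle\alpha,\lambda\rangle x}=0\quad\text{in }k[[x]],
\]
where $\langle\alpha,\lambda\rangle=\sum_i\alpha_i\lambda_i$. Because the $\lambda_i$ are $\mathbb Q$-linearly independent, distinct multi-indices $\alpha$ give distinct exponents $\mu_\alpha=\langle\alpha,\lambda\rangle\in k$. It then suffices to show that exponentials $e^{\mu x}$ with pairwise distinct $\mu\in k$ are $k$-linearly independent in $k[[x]]$. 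Comparing coefficients of $x^n/n!$ gives $\sum_\alpha c_\alpha\mu_\alpha^n=0$ for all $n\ge0$. Using only $n=0,\ldots,N-1$, where $N$ is the number of terms, this is a Vandermonde system with nonzero determinant $\prod_{\alpha<\beta}(\mu_\beta-\mu_\alpha)$. Hence all $c_\alpha=0$, a contradiction.

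The only real check is the $\mathbb Q$-linear independence of the chosen $\lambda_i$; this is the step I expect to need a citation or short argument. Choosing $\lambda_i=\sqrt{p_i}$ uses the classical fact that square roots of distinct primes are linearly independent over $\mathbb Q$ together with $1$. A cleaner choice avoiding that fact is $\lambda_i=\theta^{i-1}$ for an algebraic number $\theta$ of degree at least $r$ over $\mathbb Q$, such as $\theta=2^{1/r}$: then $1,\theta,\ldots,\theta^{r-1}$ are $\mathbb Q$-independent because the minimal polynomial $y^r-2$ is irreducible by Eisenstein. I would adopt this second choice. Everything else is routine.
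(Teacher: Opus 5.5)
Your proposal is correct and follows the paper's proof essentially step for step. The paper also takes $f_i=\exp(\lambda_i x)-1$ with $\lambda_i=\alpha^{i-1}$ for a root $\alpha$ of the Eisenstein polynomial $t^r-2$ (and $\lambda_1=1$ when $r=1$), translates to the exponentials, notes that the exponents $\sum_i\nu_i\lambda_i$ are distinct, and concludes with a nonvanishing Vandermonde determinant; you read the linear system off the coefficients of $x^n/n!$ rather than by formal differentiation at zero, which amounts to the same thing.
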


\begin{proof}
Choose $\lambda_1,\ldots,\lambda_r\in k$ linearly independent
over $\mathbb Q$ and set
\[
 f_i(x)=\exp(\lambda_i x)-1
       =\sum_{n\ge1}\frac{\lambda_i^n}{n!}x^n.
\]
Such $\lambda_i$ exist: for $r>1$, a root $\alpha$ of the
Eisenstein polynomial $t^r-2$ gives $1,\alpha,\ldots,\alpha^{r-1}$;
for $r=1$ take $\lambda_1=1$.
A polynomial relation in the $f_i$, after an invertible polynomial
translation, would yield
\[
 \sum_{\nu}a_\nu\exp(\mu_\nu x)=0,
 \qquad \mu_\nu=\sum_i\nu_i\lambda_i,
\]
with finitely many distinct multi-indices $\nu$ and not all
$a_\nu\in k$ zero. The $\mu_\nu$ are distinct. List them as
$\mu_1,\ldots,\mu_s$. Formal differentiation at zero in orders
$0,\ldots,s-1$ gives a homogeneous linear system with determinant
\[
 \det(\mu_j^{i-1})_{1\le i,j\le s}
   =\prod_{1\le i<j\le s}(\mu_j-\mu_i)\ne0,
\]
a contradiction. All operations are formal in characteristic zero.
\end{proof}

\begin{proposition}\label{prop:lower}
The ring $A$ contains a prime chain of length $r+2$.
Thus $\dim T[[x]]=r+2$.
\end{proposition}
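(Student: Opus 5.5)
The plan is to produce a prime chain $Q_0\subsetneq\cdots\subsetneq Q_r$ in $A$ avoiding $c$, and then append $P=\m[[x]]$ and the maximal ideal $P+xA$. By the proof of \cref{prop:upper}, such a chain would have length $r+2$. The $c$-avoiding part will come from the Noetherian ring $H$ via $A[1/c]=H[1/c]$ (see \eqref{eq:localization}). \cref{lem:exponentials} is what guarantees that the relevant primes of $H$ have large height.

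\emph{Step 1: a height-$r$ maximal ideal of $B$ avoiding $x$.} In $L[U]$ the ideal $(u_1-f_1,\ldots,u_r-f_r)$ is maximal of height $r$, because its quotient is $L$. Since $B[1/x]$ is a localization of $L[U]$, its contraction $\m'$ to $B$ is a proper prime exactly when it meets no element of $S$. An element $g\in S$ lies in it iff $g(f_1,\ldots,f_r)=0$, which \cref{lem:exponentials} excludes. Thus $\m'$ is a prime of height $r$ with $x\notin\m'$. It is maximal because $B/\m'$ embeds in $L$ and is generated over the field $B\cap\ldots$; if maximality is awkward, I only need that $\m'$ is prime of height $r$ and then pass to any maximal ideal containing it that still avoids $x$. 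The latter is possible since the only prime containing $x$ is $xB$, which has height $1<r$ when $r\ge2$; the case $r=1$ is handled directly, as $\m'$ is then a height-one prime different from $xB$ and hence maximal by \eqref{eq:Bdim}.

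\emph{Step 2: lift to $H$.} Let $\mathfrak n$ be the preimage of $\m'$ under $H\to H/cH\cong B$ from \eqref{eq:Hgraded}; thus $c\in\mathfrak n$. Since $H$ is a Noetherian domain (\cref{prop:noetherian}) and $c\ne0$, we get $\dim H_{\mathfrak n}=\dim B_{\m'}+1=r+1$. Next I need a prime $\q\subseteq\mathfrak n$ with $\height\q=r$ and $c\notin\q$.
\begin{itemize}
\item If $r\ge2$, every prime minimal over $c$ has height $1<r$, so it suffices to take any height-$r$ prime of a saturated chain below $\mathfrak n$.
\item If $r=1$, a two-dimensional Noetherian local domain has infinitely many height-one primes, while only finitely many are minimal over $c$. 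So some height-one $\q\subseteq\mathfrak n$ avoids $c$.
\end{itemize}
In either case there is a chain $0=\q_0\subsetneq\cdots\subsetneq\q_r=\q$ in $H$ avoiding $c$.

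\emph{Step 3: contract to $A$.} Because $A[1/c]=H[1/c]$, the contractions $\q_i\cap A$ remain distinct primes avoiding $c$. Next, $\mathfrak n\cap A$ is a prime of $A$ containing $c$, so by \eqref{eq:closed-part} it is either $P$ or $P+xA$. It is not the latter: $x\in\mathfrak n$ would force $x\in\m'$. Hence $\q\cap A\subsetneq\mathfrak n\cap A=P\subsetneq P+xA$, where the first inclusion is strict because $c$ lies in $P$ but not in $\q$. This gives a chain of length $r+2$, and \cref{prop:upper} yields equality.

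The main obstacle I expect is Step 1, namely showing that $\m'$ is a genuine prime of height $r$ in $B$ that avoids $x$; this is exactly where the algebraic independence of the $f_i$ enters. The height computation in Step 2 is standard Noetherian theory, but it requires care with the $r=1$ case.
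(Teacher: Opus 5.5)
\textbf{Gap in Step 2 (for $r\ge2$).} Steps 1 and 3 are sound. Your $\m'$ is the kernel of the substitution $B\to L$, i.e.\ the paper's $\q_r$, and its maximality is automatic from $\dim B\le r$. The gap is in Step 2 when $r\ge2$.

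Knowing that the primes minimal over $c$ have height $1<r$ does not show that a height-$r$ prime avoids $c$. A prime can contain $c$ without being minimal over it, as $(c,y)$ in $k[[c,y,z]]$ shows. This happens inside $H_{\mathfrak n}$ itself:
\begin{itemize}
\item Since $c$ is a nonzerodivisor in the Noetherian domain $H_{\mathfrak n}$, every prime $\p\ni c$ satisfies $\height\p=\height(\p/cH_{\mathfrak n})+1$.
\item Hence the preimage of any height-$(r-1)$ prime of $B_{\m'}$ is a height-$r$ prime containing $c$.
\item That preimage lies on a chain of length $r+1=\dim H_{\mathfrak n}$ ending at $\mathfrak nH_{\mathfrak n}$.
\end{itemize}
So ``any height-$r$ prime of a saturated chain below $\mathfrak n$'' may contain $c$. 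If it does, Step 3 collapses: $\q\cap A$ contains $c$, so it equals $P$ rather than lying strictly below it.

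\textbf{Repair.} Apply your $r=1$ argument inductively along a chain. Take a chain $0=P_0\subsetneq\cdots\subsetneq P_{r+1}=\mathfrak nH_{\mathfrak n}$ of maximal length, and let $j\le r$ be the least index with $c\in P_j$.
\begin{itemize}
\item Maximality of the length forces $(H_{\mathfrak n}/P_{j-1})_{P_{j+1}}$ to be a two-dimensional Noetherian local domain.
\item It has infinitely many height-one primes. Only the finitely many primes minimal over the nonzero image of $c$ contain that image.
\item Replace $P_j$ by a prime strictly between $P_{j-1}$ and $P_{j+1}$ that avoids $c$. This raises $j$, and after at most $r$ steps $c\notin P_r$.
\end{itemize}
With this added, your proof is correct.

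\textbf{A simpler route, and comparison with the paper.} You can bypass $H$ entirely. Since $\height\m'=r$, choose a chain $\p_0\subsetneq\cdots\subsetneq\p_r=\m'$ in $B$ and set $Q_j=\p_j[[c]]\cap A$.
\begin{itemize}
\item These are primes avoiding $c$, because $(B/\p_j)[[c]]$ is a domain.
\item Strictness is witnessed by $c^db$, where $b\in\p_j\setminus\p_{j-1}$ and $2^d-1\ge w(b)$.
\item $Q_r\subseteq P$, because $\m'\cap F=0$.
\end{itemize}
This is essentially the paper's proof. The paper uses the explicit chain $\q_j=(u_1-f_1(x),\ldots,u_j-f_j(x))B$, with witnesses $c(u_j-f_j(x))$. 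For the lower bound it needs neither the Noetherianity of $H$ nor the identity $A[1/c]=H[1/c]$. Your approach uses algebraic independence only for the single prime $\m'$ and lets Noetherian dimension theory produce the chain. The price is the heavier machinery and the chain-modification step you omitted.
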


\begin{proof}
Choose the series in \cref{lem:exponentials} and put
\[
 \q_j=(u_1-f_1(x),\ldots,u_j-f_j(x))B\quad(1\le j\le r),
 \qquad\q_0=(0).
\]
Before localizing, substitution of $f_i(x)$ for $u_i$, $i\le j$,
defines a surjection
\[
 F[U]\longrightarrow F[u_{j+1},\ldots,u_r]
\]
with kernel $(u_1-f_1(x),\ldots,u_j-f_j(x))F[U]$.
Its kernel is disjoint from $S$: expanding a nonzero element of
$k[U]$ in the remaining variables would otherwise give a nonzero
polynomial relation over $k$ among $f_1,\ldots,f_j$.
Localization therefore gives proper prime ideals and a strict chain
\begin{equation}\label{eq:qchain}
 (0)=\q_0\subsetneq\q_1\subsetneq\cdots\subsetneq\q_r.
\end{equation}
For strictness, $u_j-f_j(x)$ is nonzero in the domain obtained
after the preceding substitution, and remains nonzero after
localization. Full substitution extends to $B\longrightarrow L$
with kernel $\q_r$, since every element of $S$ has nonzero image
in $F\subseteq L$. This map fixes $F$, so
\begin{equation}\label{eq:qF}
 \q_r\cap F=(0).
\end{equation}

The coefficientwise ideal $\q_j[[c]]\subseteq B[[c]]$ is prime,
since its quotient is $(B/\q_j)[[c]]$, a domain.
Contract these primes to $A$:
\[
 Q_j=\q_j[[c]]\cap A\qquad(0\le j\le r).
\]
They all avoid $c$, and $Q_0=(0)$.
Since $u_j=D_1u_j/D_1\in W_1$ and $f_j(x)\in F$, one has
\[
 c(u_j-f_j(x))\in Q_j\setminus Q_{j-1}
 \qquad(1\le j\le r),
\]
where membership in $A$ follows from~\eqref{eq:array}.
Thus the contracted chain is strict.

If $a\in Q_r$, its coefficient at $c^0$ belongs to $E_0=F$
by~\eqref{eq:array} and to $\q_r$ by definition. It is zero
by~\eqref{eq:qF}. This says exactly that every $x$-coefficient
of $a$ belongs to $\m$, so $Q_r\subseteq P=\m[[x]]$.
The inclusion is strict because $c\in P\setminus Q_r$.
Finally $P\subsetneq P+(x)$ is a strict prime inclusion, since
$A/P\cong k[[x]]$. Hence
\begin{equation}\label{eq:final-chain}
 (0)=Q_0\subsetneq Q_1\subsetneq\cdots\subsetneq Q_r
       \subsetneq P\subsetneq P+(x)
\end{equation}
has length $r+2$. Combining this with \cref{prop:upper} proves
the dimension equality.
\end{proof}

\begin{proof}[Proof of \cref{thm:local}]
For $m=2$ use the discrete valuation ring $k[[c]]$.
For $m\ge3$ set $r=m-2$ and use $T_r$ from~\eqref{eq:T}.
Its required dimensions follow from \cref{prop:local,prop:lower}.
\end{proof}

\section{The classification of finite pairs}\label{sec:classification}

\begin{proof}[Proof of \cref{thm:classification}]
Suppose first that $\dim R=n$ and $\dim R[[x]]=m$ are finite.
Choose a chain of prime ideals
\[
  \p_0\subsetneq\p_1\subsetneq\cdots\subsetneq\p_n
\]
in $R$.  For each $i$, coefficientwise reduction gives
\[
  R[[x]]/\p_i[[x]]\cong (R/\p_i)[[x]],
\]
which is a domain.  Thus each coefficientwise ideal $\p_i[[x]]$
is prime.  Their inclusions are strict, as witnessed by constant
series from $\p_i\setminus\p_{i-1}$.  Moreover,
\[
  R[[x]]/(\p_n[[x]]+(x))\cong R/\p_n
\]
is a domain, and $x\notin\p_n[[x]]$.  Consequently
\[
  \p_0[[x]]\subsetneq\cdots\subsetneq\p_n[[x]]
  \subsetneq\p_n[[x]]+(x)
\]
is a prime chain of length $n+1$, proving
\begin{equation}\label{eq:necessary-lower-bound}
  m\ge n+1.
\end{equation}
If $n=0$, finiteness of $m$ and \cref{eq:arnold} imply that $R$
is an SFT ring.  The zero-dimensional result
\cref{eq:zero-classical} then gives $m=1$.  Hence every finite
pair has the asserted form.

Conversely, a field realizes $(0,1)$.  Fix integers
$1\le n<m$.  By \cref{thm:local}, choose a local integral domain
$T$ such that
\[
  \dim T=1,\qquad \dim T[[x]]=m.
\]
Let
\[
  C=k[z_1,\ldots,z_n],\qquad R=T\times C.
\]
The polynomial ring $C$ is Noetherian of dimension $n$, so the
Noetherian power-series dimension formula yields
$\dim C[[x]]=n+1$.  The prime spectrum of a finite direct
product is the disjoint union of the spectra of its factors,
and coefficientwise projection gives
\[
  (T\times C)[[x]]\cong T[[x]]\times C[[x]].
\]
It follows that
\[
  \dim R=\max\{1,n\}=n,\qquad
  \dim R[[x]]=\max\{m,n+1\}=m.
\]
This realizes every asserted pair.
\end{proof}

\Needspace{10\baselineskip}
\begin{corollary}\label{cor:no-finite-upper-bound}
For every fixed integer $n\ge1$, the set
\[
  \bigl\{\dim R[[x]]:\dim R=n,\ \dim R[[x]]<\infty\bigr\}
\]
is unbounded.  For $n=1$, it remains unbounded when $R$ is
required to be a local integral domain.
\end{corollary}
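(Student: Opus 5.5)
The corollary follows directly from \cref{thm:classification} and \cref{thm:local}; no new construction is needed. The plan is to show that, for each fixed $n\ge1$, every integer $m>n$ is attained as $\dim R[[x]]$ for some ring $R$ with $\dim R=n$.

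First fix $n\ge1$ and any integer $m>n$. The pair $(n,m)$ satisfies $1\le n<m$, so the realization direction of \cref{thm:classification} gives a nonzero ring $R$ with $\dim R=n$ and $\dim R[[x]]=m<\infty$. Concretely, this ring is $R=T\times k[z_1,\ldots,z_n]$ with $T$ from \cref{thm:local}. Hence the displayed set contains every integer greater than $n$, and so it is unbounded.

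For $n=1$ with the local-domain requirement, I would invoke \cref{thm:local} directly. For each $m\ge2$ it gives a one-dimensional local domain $T$ with $\dim T[[x]]=m$; for $m\ge3$ this is the ring $T_{m-2}$ of~\eqref{eq:T}. So the restricted set is exactly $\{m:m\ge2\}$, which is unbounded. The equality uses the lower bound~\eqref{eq:necessary-lower-bound}, although only the containment $\supseteq$ is needed.

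There is no genuine obstacle here. The one point to state explicitly is that \cref{thm:classification} uses a direct product, which is not a domain, so the domain clause for $n=1$ must rest on \cref{thm:local} rather than on \cref{thm:classification}.
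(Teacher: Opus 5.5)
Your proposal is correct and matches the paper's proof: the first assertion is read off from the realization direction of \cref{thm:classification}, and the local-domain clause for $n=1$ comes from \cref{thm:local}. Your remark that the product construction is not a domain, so the second clause must rest on \cref{thm:local}, is exactly why the paper cites the two results separately.
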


\begin{proof}
The first assertion follows from \cref{thm:classification},
and the second follows from \cref{thm:local}.
\end{proof}

\renewcommand{\sectionname}{}
\section*{Acknowledgments}
We used GPT-5.6 Sol and an agentic harness built around GPT-5.6 Sol and Claude Fable 5 to assist with literature searches, hypothesis testing, the exploration and elimination of potential approaches, wording refinement, and manuscript proofreading. We thank Hieu M. Vu, Tho Tran Huu, Khoi M. N. Nguyen, Dung V. Nguyen, and Quang X. Nguyen for their assistance with hardware-related matters and for providing technical support in the use of the AI tools and agentic system. 
\bibliographystyle{plain}
\bibliography{references}

\end{document}